\title{A family of accumulation points of non-free rational numbers}
\author{Christopher (Bobby) Buyalos}
\address[C. Buyalos]{University of Chicago, United States}
\email{\href{mailto:cbuyalos@uchicago.edu}{cbuyalos@uchicago.edu}}
\author{Jayden Thadani}
\address[J. Thadani]{Harvey Mudd College, United States}
\email{\href{mailto:jthadani@hmc.edu}{jthadani@hmc.edu}}
\author{Xinbei Wang}
\address[X. Wang]{Brown University, United States}
\email{\href{mailto:xinbei_wang@brown.edu}{xinbei\_wang@brown.edu}}
\author{Bradley Zykoski}
\address[B. Zykoski]{Northwestern University, United States}
\email{\href{mailto:zykoski@northwestern.edu}{zykoski@northwestern.edu}}
\author{Michael Zshornack}
\address[M. Zshornack]{Northwestern University, United States}
\email{\href{mailto:zshornack@northwestern.edu}{zshornack@northwestern.edu}}
\begin{document}

\begin{abstract}
    For any $q\in\bbR$, let $A:=\big(\begin{smallmatrix}1 & 1\\0 & 1\end{smallmatrix}\big), B_q:=\big(\begin{smallmatrix}1 & 0\\q & 1\end{smallmatrix}\big)$ and let $G_q:=\langle A,B_q\rangle\leqslant\operatorname{SL}(2,\mathbb{R})$. Kim and Koberda conjecture that for every $q\in\mathbb{Q}\cap(-4,4)$, the group $G_q$ is not freely generated by these two matrices. We generalize work of Smilga and construct families of $q$ satisfying the conjecture that accumulate at infinitely many different points in $(-4,4)$. We give different constructions of such families, the first coming from applying tools in Diophantine geometry to certain polynomials arising in Smilga's work, the second from sums of geometric series and the last from ratios of Pell and Half-Companion Pell Numbers accumulating at $1+\sqrt{2}$.
\end{abstract}

\maketitle

\section{Introduction}
For any $q\in\bbR$, let $A:=\left(\begin{smallmatrix}1 & 1\\0 & 1\end{smallmatrix}\right), B_q:=\left(\begin{smallmatrix}1 & 0\\q & 1\end{smallmatrix}\right)$ and let
\[
G_q :=\langle A, B_q\rangle\leqslant\operatorname{SL}(2,\bbR).
\]
In general, we are far from completely understanding the dependence of the group-theoretic properties of $G_q$ on the choice of parameter $q$. When $|q|\geq 4$, a simple ping-pong argument shows that $G_q$ is both discrete and freely generated by $A$ and $B_q$, but for $|q|<4$, much less is known about the overall structure of these groups. For one, it is not known if such a freely generated example exists for $|q|<4$, though such an example would necessarily be indiscrete.

The freeness question for $G_q$ was initially explored in \cite{Lyndon_Ullman_1969} and it was later conjectured in \cite{kimkoberda} that for all $q\in\bbQ$ with $|q|<4$, $G_q$ is never isomorphic to a free group of rank $2$. For concreteness, we define the set
\[
\mathcal{R}_\bbQ:=\left\{q\in\bbQ\,:\,G_q\textrm{ is not freely generated by }A\textrm{ and }B_q\right\}.
\]
We refer to elements $q\in\mathcal{R}_\bbQ$ as \textit{non-free} rational numbers. In \cite{kimkoberda}, the authors pose the following conjecture.

\begin{conjecture}
\label{kimkoberdaconjecture}
    $\mathcal{R}_\bbQ=\bbQ\cap(-4,4)$.
\end{conjecture}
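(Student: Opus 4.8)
The plan is to treat the claimed equality as two inclusions. The inclusion $\mathcal{R}_\bbQ\subseteq\bbQ\cap(-4,4)$ is immediate from the introduction: when $|q|\geq 4$ the ping-pong argument shows $A$ and $B_q$ freely generate $G_q$, so no such $q$ is non-free. All the content is in the reverse inclusion $\bbQ\cap(-4,4)\subseteq\mathcal{R}_\bbQ$, which I would first reformulate. For a reduced word $w$ in the free group $F_2=\langle x,y\rangle$, write $w(A,B_q)$ for the matrix obtained by substituting $x\mapsto A$, $y\mapsto B_q$; its entries are polynomials in $q$ with integer coefficients. Hence $Z_w:=\{q\in\bbR:w(A,B_q)=I\}$ is finite for every $w\neq 1$, since otherwise those polynomials would vanish identically, forcing $w(A,B_q)=I$ for all $q$ and so $w=1$ by the case $|q|\geq 4$. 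Thus $\mathcal{R}_\bbR=\bigcup_{w\neq 1}Z_w$ is a countable union of finite sets and $\mathcal{R}_\bbQ=\bbQ\cap\mathcal{R}_\bbR$, so the conjecture asserts precisely that each rational $q_0\in(-4,4)$ can be certified non-free by \emph{exhibiting}, for that particular $q_0$, a nontrivial word $w$ with $w(A,B_{q_0})=\pm I$.

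I would then organize the hunt for these words. Conjugation by $\operatorname{diag}(1,-1)$ carries $(A,B_q)$ to a generating pair of $G_{-q}$ that is Nielsen-equivalent to $(A,B_{-q})$, so $q_0\in\mathcal{R}_\bbQ\iff -q_0\in\mathcal{R}_\bbQ$, and it is enough to treat $q_0\in(0,4)$ (the case $q_0=0$ being trivial, as $B_0=I$). Writing $q_0=p/n$ in lowest terms and $H(q_0):=\max(|p|,n)$, the natural route is an induction on $H$. There is a semi-decision procedure for non-freeness — enumerate words by length and test whether $w(A,B_{q_0})=\pm I$ — which halts exactly on the non-free $q_0$, and the aim is to bound its running time, equivalently the length of a shortest relation in $G_{q_0}$, in terms of $H(q_0)$. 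Such a bound should come from a family of group-theoretic identities converting a relation in $G_{p/n}$ into a relation in $G_{p'/n'}$ for some rational of strictly smaller height, with the choice of identity dictated by the continued-fraction expansion of $p/n$, supplemented by a finite list of small-height base cases checked directly (extending the ranges already verified computationally in the literature). The explicit families of Smilga and of the present paper — roots of Smilga's polynomials analyzed via Diophantine geometry, sums of geometric series, and ratios of Pell numbers — are then the instances of this descent clean enough to exhibit in closed form, and a realistic intermediate milestone is to upgrade ``accumulates at infinitely many points'' first to ``$\mathcal{R}_\bbQ$ is dense in $(-4,4)$'' and only then to the full statement.

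The main obstacle is exactly the totality of this descent: making the procedure close up for \emph{every} rational in $(-4,4)$, not merely for an ever-growing but still sparse collection, is where the conjecture's difficulty is concentrated. The structural reason is the absence of any symmetry to leverage — conjugation preserves the invariant $q=ts$ of a standard pair $\big(\begin{smallmatrix}1 & t\\0 & 1\end{smallmatrix}\big),\big(\begin{smallmatrix}1 & 0\\s & 1\end{smallmatrix}\big)$ and is therefore powerless to move $q$, while Nielsen moves applied to $(A,B_q)$ leave standard form altogether, so there is no evident group action on $q$ that would reduce $\bbQ\cap(-4,4)$ to a finite set to verify. Every rational must be handled individually, and the known constructions (Lyndon–Ullman, Kim–Koberda, Smilga, and the families of this paper) jointly cover only specific infinite families and bounded-height rationals, producing a subset of $\bbQ\cap(-4,4)$ that is, optimistically, dense in $(-4,4)$ but not all of it. Closing that gap — equivalently, bounding shortest-relation length as a function of the height of $q$, or else finding a new obstruction ruling out a rational parameter for a free (necessarily indiscrete) group — is the crux and remains open; the results of this paper should be read as progress toward, rather than a resolution of, Conjecture~\ref{kimkoberdaconjecture}.
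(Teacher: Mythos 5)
The statement you were asked about is Conjecture~\ref{kimkoberdaconjecture}, which the paper does not prove --- it is an open conjecture of Kim--Koberda, and the paper's actual results (Theorems~\ref{thm:main2} and~\ref{thm:main}) only establish that certain rational points are accumulation points of $\mathcal{R}_\bbQ$. Your write-up is therefore correctly calibrated as a research program rather than a proof, and the parts that are genuinely argued are fine: the inclusion $\mathcal{R}_\bbQ\subseteq\bbQ\cap(-4,4)$ via ping-pong, the finiteness of each $Z_w$ (the identical-vanishing argument using the free range $|q|\geq 4$ is correct), the symmetry $q\in\mathcal{R}_\bbQ\iff -q\in\mathcal{R}_\bbQ$ via conjugation by $\operatorname{diag}(1,-1)$ together with a Nielsen move, and the reformulation that one must exhibit, for each rational $q_0\in(-4,4)$, a nontrivial word $w$ with $w(A,B_{q_0})=\pm I$.

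The genuine gap is the entire content of the hard inclusion. Your proposed engine --- ``a family of group-theoretic identities converting a relation in $G_{p/n}$ into a relation in $G_{p'/n'}$ of strictly smaller height, dictated by the continued-fraction expansion'' --- is asserted, not constructed, and nothing in the paper or the cited literature supplies such identities; indeed, the absence of any operation moving the parameter $q$ (which you yourself note) is precisely why no descent of this kind is known. Equivalently, your plan requires a bound on shortest-relation length in terms of the height of $q$, which is a strictly stronger (and equally open) quantitative form of the conjecture; the semi-decision procedure halting on non-free $q$ gives no such bound. So the proposal reduces the conjecture to an unproved and harder-looking claim, and as you acknowledge in your final sentence, the conjecture remains open; what you have written cannot be counted as a proof of the statement, only as a correct account of why it is difficult and of where the paper's results (accumulation points from Smilga-type half-relation polynomials, geometric series, and Pell ratios) sit relative to it.
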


Kim and Koberda give evidence for this conjecture by establishing it for all $q\in\bbQ\cap(-4,4)$ whose numerator, in least terms, is at most $27$ and not $24$, along with exhibiting positive density subsets of rationals with arbitrary fixed numerator such that the conjecture holds. Despite this progress, a number of seemingly basic properties of the set $\mathcal{R}_\bbQ$ remain mysterious. For instance, it is still unknown if $\mathcal{R}_\bbQ$ is dense in $[-4,4]$, or even if $\pm 4\in\overline{\mathcal{R}_\bbQ}$. To the authors' best understanding it is even unknown whether $\overline{\mathcal{R}_\bbQ}$ contains any interval or has positive measure.

Short of establishing this conjecture in full, it is fruitful to try and show $\mathcal{R}_\bbQ$ contains larger and larger subsets of $\bbQ\cap(-4,4)$ than previously known. For instance, in \cite{Smilga2021}, the author produces novel sequences in $\mathcal{R}_\bbQ$ which limit to ``large'' elements of $[-4,4]$. Note that, for instance, while the results in \cite{kimkoberda} provide a wealth of new examples of non-free rational numbers, a priori, these all accumulate at $0$. In practice, it is much harder to find examples of non-free rationals which accumulate at large elements of $[-4,4]$, such as $\pm 4$. Smilga's examples show that both $\pm(2+\sqrt{2})$ and $\pm\frac{3+\sqrt{5}}{2}$ are such accumulation points and, at the time of writing, are among the largest elements of $[-4,4]$ known to be contained in $\overline{\mathcal{R}_\bbQ}$.

This paper extends the methods of \cite{Smilga2021} to construct new families of elements in $\mathcal{R}_\bbQ$ which accumulate at new points in $[-4,4]$. None of the examples we produce are larger than those in \cite{Smilga2021} (our largest example is $2$) but the main result constructs sequences of non-free rational numbers accumulating at a family of rational numbers naturally arising in the work of Kim--Koberda. See Theorem~\ref{thm:main} for a more precise statement.

\begin{theorem}
\label{thm:main2}
    For nonzero integers $r,s,t$, we have that $\frac{r+t}{rst}$ is a limit of a non-constant sequence of non-free rational numbers.
\end{theorem}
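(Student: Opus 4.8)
The plan is to exhibit, for the given nonzero integers $r,s,t$, a non-constant sequence of rationals $q_n\to\frac{r+t}{rst}$, each certified non-free by producing an explicit finite-order element of $G_{q_n}$; this is the qualitative content of the more precise Theorem~\ref{thm:main}. First I would isolate the following sufficient condition for membership in $\mathcal{R}_\bbQ$: if $w$ is a nontrivial reduced word in the free group of rank two and $\operatorname{tr}\!\big(w(A,B_q)\big)\in\{-1,0,1\}$, then $q\in\mathcal{R}_\bbQ$. Indeed such a matrix lies in $\operatorname{SL}(2,\bbR)$ with trace in $(-2,2)$ and is not $\pm I$, hence is elliptic of order $3$, $4$ or $6$; so $w(A,B_q)$ is a nontrivial torsion element of $G_q$, which is impossible if $A$ and $B_q$ freely generate $G_q$, free groups being torsion-free. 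Note also that $q\in\mathcal{R}_\bbQ\iff -q\in\mathcal{R}_\bbQ$ (conjugate by $\operatorname{diag}(1,-1)$, which sends $A\mapsto A^{-1}$, $B_q\mapsto B_{-q}$), and that replacing $s$ by $-s$ negates $\frac{r+t}{rst}$; so it suffices to realize one sign of the target.

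The basic computational input is the trace identity
\[
\operatorname{tr}\!\big(A^{a_1}B_q^{b_1}A^{a_2}B_q^{b_2}\big)=2+(a_1+a_2)(b_1+b_2)\,q+a_1a_2b_1b_2\,q^2 ,
\]
and, more generally, the fact that the trace of a reduced alternating word of length $2k$ is an integer polynomial in $q$ of degree $k$ whose coefficients are polynomial in the exponents. The key algebraic observation is that $\frac{r+t}{rst}=\frac{1}{rs}+\frac{1}{st}$ is the sum of a convergent geometric series; writing its partial sums $q_n$, I would construct a sequence of reduced alternating words $w_n$ (of growing length, or defined by a recursion that increases length by a fixed amount at each step) whose exponents are chosen in terms of $r,s,t$ and $n$ so that $\operatorname{tr}\!\big(w_n(A,B_{q_n})\big)$ is \emph{identically} equal to a fixed element of $\{-1,0,1\}$. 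Granting such a family, the criterion of the first paragraph gives $q_n\in\mathcal{R}_\bbQ$, the convergence $q_n\to\frac{r+t}{rst}$ is the convergence of the geometric series, and non-constancy is clear since consecutive partial sums differ.

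It therefore remains to build the family $(w_n,q_n)$, and this is the step I expect to be the main obstacle. A length-four word does not suffice: if one tries to define $q_n$ as a root of the quadratic $\operatorname{tr}(A^{a_1}B_q^{b_1}A^{N}B_q^{b_2})=v$ with $a_1,b_1,b_2$ fixed in terms of $r,s,t$ and the auxiliary exponent $N\to\infty$, then the non-vanishing root does tend to $\pm\frac{r+t}{rst}$, but requiring this root to be \emph{rational} forces the discriminant, a quadratic in $N$ with square leading coefficient, to be a perfect square, and one checks that this has only finitely many solutions $N$. Defeating this degeneration is the heart of the proof: one passes to longer (or composite/recursive) words whose defining integrality condition becomes a genuine Pell-type equation $x^2-Dy^2=m$ with $D$ nonsquare---so that infinitely many exponent tuples are admissible---or, in the alternative route, one fixes the exponents of $w_n$ and the partial sums $q_n$ simultaneously by the geometric-series bookkeeping so that the trace identity holds exactly for every $n$ by direct computation, bypassing any root-finding. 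This is precisely where the Diophantine-geometry tools applied to Smilga's polynomials (respectively, the geometric-series construction) referenced in the abstract come in; once the family is in place, the elementary reductions and trace bookkeeping above finish the argument.
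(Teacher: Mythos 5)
Your sufficient criterion for non-freeness is sound: a nontrivial reduced word $w$ with $\operatorname{tr}(w(A,B_q))\in\{-1,0,1\}$ gives a torsion element of $G_q$, and free groups are torsion-free. This is a legitimate alternative to the mechanism the paper actually uses, namely Smilga's half-relations ($P^5_{HR}(a_1,\dots,a_5;q)=0$ forces $A^{a_1}B_q^{a_2}A^{a_3}B_q^{a_4}A^{a_5}=A^{a_5}B_q^{a_4}A^{a_3}B_q^{a_2}A^{a_1}$). However, the proposal has a genuine gap: the construction of the family $(w_n,q_n)$ --- which you yourself identify as ``the heart of the proof'' --- is never carried out. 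You correctly diagnose the obstruction (for a quadratic-in-$q$ condition with one exponent $N\to\infty$, rationality of the root requires a discriminant that is quadratic in $N$ to be a perfect square, and with square leading coefficient this generically has only finitely many solutions), and you correctly guess that one must engineer a Pell-type condition with nonsquare leading coefficient; but no words, no exponents, and no infinitude of solutions are actually produced, and ``granting such a family'' is precisely the content of the theorem. The fallback ``geometric-series bookkeeping'' route also cannot reach the general target: $\frac{r+t}{rst}=\frac{1}{rs}+\frac{1}{st}$ is a sum of two terms, not a limit of partial sums of a geometric series, and the paper's geometric-series construction only produces half-relations for rationals of the special form $k^{-s+1}-k^{-t}$, which do not accumulate at a general $1$-step relation number.

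For comparison, the paper closes exactly this gap as follows. It takes the length-$5$ half-relation polynomial $P^5_{HR}(a_1,\dots,a_5;q)=c_2q^2+c_1q+c_0$, regards its discriminant $\Delta_a$ as a quadratic in $a_1$ with $a_2,\dots,a_5$ fixed, and encodes ``$\Delta_a$ is a square'' as an integral point on the conic $y^2=\alpha_a x^2+\beta_a x+\gamma_a$. A theorem of Silverman--Poulakis on integral points on conics then yields infinitely many admissible $a_1$ once one verifies that $(0,a_3a_4a_5)$ is a nonsingular integral point and that $\alpha_a>0$ (arranged by taking $a_2$ large, using $\alpha_a=(a_3+a_5)^2a_2^2+O(a_2)$, with the degenerate case $a_3=-a_5$ giving $\alpha_a=(a_4a_5)^2>0$). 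The iterated limits $\lim_{a_2\to\infty}\lim_{a_1\to\infty}q_a^-=-\tfrac{a_3+a_5}{a_3a_4a_5}$ then realize every $1$-step relation number as an accumulation point. Some such Diophantine input is unavoidable here, and it is exactly the piece your argument leaves open.
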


\subsection{Sketch of the proof}
In \cite{Smilga2021}, the author shows that a relation of a particular form in $A$ and $B_q$ holding is equivalent to $q$ being a root of a polynomial in a certain family. The roots of the quadratic polynomials in this family can be explicitly determined, but for these roots to be rational, a certain discriminant must be a perfect square. To construct non-free rational numbers accumulating at many different points in $[-4,4]$, we use work of \cite{SILVERMAN2000163} to show that for some of the quadratic polynomials in this family, this Diophantine obstruction is described by an algebraic curve containing infinitely many integral points. Consequently, we produce many $q\in\bbQ$ satisfying \textit{half-relations} of the form in \cite{Smilga2021}. The non-free rational numbers produced depend on a tuple of parameters, $(a_1,a_2,a_3,a_4,a_5)$ which, when sent to infinity appropriately, produce the various accumulation points in Theorem~\ref{thm:main2}.

\subsection{Groups generated by opposite parabolic matrices}
A subgroup of $\operatorname{SL}(2,\bbQ)$ generated by a pair of parabolic matrices stabilizing distinct lines in $\bbR\mathbb{P}^1$ is always conjugate over $\operatorname{SL}(2,\bbR)$ to a group of the form $G_q$ for some choice of $q$. Such a conjugacy need not preserve the group being contained in $\operatorname{SL}(2,\bbQ)$ and so, more generally, one can study similar questions explored here about subgroups generated by any pair of distinct parabolic matrices with rational entries.

Recent work on questions asked by Greenberg and Shalom on commensurators of discrete subgroups of semisimple Lie groups bears a number of consequences on the structure of such groups. In \cite{brody-fisher-mj-limbeek-2023}, the authors consider the question (dubbed \textit{the Greenberg--Shalom Hypothesis}) of whether Zariski-dense discrete subgroups of semisimple Lie groups with almost dense commensurator are necessarily arithmetic lattices. This question generalizes a characterization of arithmeticity among lattices originally due to Margulis and in particular, if true, would imply that Conjecture~\ref{kimkoberdaconjecture} holds. More specifically (c.f. Theorem 4.8 in \cite{brody-fisher-mj-limbeek-2023}), if $q=\frac{r}{s}\in\bbQ\cap(-4,4)$ is written in least terms, then the Greenberg--Shalom Hypothesis implies that $G_q$ is a finite-index subgroup of the $S$-arithmetic group $\operatorname{SL}(2,\bbZ[\frac{1}{s}])$, and in particular, is not isomorphic to a free group. 

Further evidence for both the Greenberg--Shalom Hypothesis and Conjecture~\ref{kimkoberdaconjecture} is given in \cite{nybergbrodda} where the author shows, among other results, that $G_q$ is indeed finite index in the corresponding $S$-arithmetic group for $q\in\bbQ\cap(-4,4)$ whose numerator, in least terms, is at most $4$. 

\subsection{Some further questions}
We close with a number of questions that have not been recorded in previous literature on this topic, but we feel are worth further exploration.

First, the rational numbers in the statement of Theorem~\ref{thm:main2} are of note because they arise as some of the simplest types of relation non-free within a hierarchy introduced in \cite{kimkoberda}. See \S\ref{sec:step} for a precise definition but, in short, these \textit{$1$-step relation numbers} are among the elements of $\mathcal{R}_\bbQ$ for which it is particularly easy to exhibit a relation in the corresponding group. It is worth further understanding the relationship between the hierarchy of non-free numbers used in \cite{kimkoberda} and the property of being an accumulation point of $\mathcal{R}_\bbQ$. If $1$-step relation numbers are among the easiest rationals for which one can exhibit a nontrivial relation in $G_q$, then our work establishes that they are also among the easiest to exhibit as accumulation points of $\mathcal{R}_\bbQ$. Since Kim and Koberda also give a number of explicit examples of \textit{$2$-step relation numbers}, the next stage in their hierarchy, it is natural to wonder whether it is also relatively easy to show these are accumulation points. Notably, the rationals in $\bbQ\cap(-4,4)$ with numerator at most $27$ and not $24$ are all $2$-step relation numbers. Thus, we ask the following.

\begin{question}
    Are $2$-step relation numbers accumulation points of $\mathcal{R}_\bbQ$?
\end{question}

Notably, if true, then this would imply $\frac{27}{7}\approx 3.857$ is an accumulation point of non-free rational numbers and, to our best understanding, would be the largest such example known to date. 

Beyond just producing new examples of elements in $\mathcal{R}_\bbQ$, it would be meaningful progress towards Conjecture~\ref{kimkoberdaconjecture} to describe how quantitatively ``large'' this set is. While it has been previously asked whether $\mathcal{R}_\bbQ$ is dense in $[-4,4]$, the following more elementary questions seem to still be unknown.

\begin{question}
    Does $\overline{\mathcal{R}}_\bbQ$ contain any interval? Does it have positive measure?
\end{question}

Lastly, we note that our focus on \textit{rationals} in $(-4,4)$ is in some sense superficial. The freeness question and the implications of the Greenberg--Shalom Hypothesis on $G_q$ also apply to many algebraic choices of $q\in(-4,4)$. We let
\[
\mathcal{R}_{\overline{\bbQ}}:=\left\{q\in\bbR\cap\overline{\bbQ}\,:\,G_q\textrm{ is not freely generated by }A\textrm{ and }B_q\right\}.
\]
The Greenberg--Shalom Hypothesis implies that for non-integral $q$, $q\notin\mathcal{R}_{\overline{\bbQ}}$ if and only if $q$ has a conjugate, $q^\sigma$, such that $G_{q^\sigma}$ is both free \textit{and} discrete (c.f. Theorem 4.9 in \cite{brody-fisher-mj-limbeek-2023}). Unlike the rational case, work in \cite{cjr} shows that $\mathcal{R}_{\overline{\bbQ}}$ is dense in $[-4,4]$; however, density is established using algebraic numbers of arbitrarily large degree. For any $d\geq 1$, if we set
\[
\mathcal{R}_{\overline{\bbQ}}^{(d)}:=\left\{q\in\bbR\cap\overline{\bbQ}\,:\,q\textrm{ has degree }\leq d\textrm{ and }G_q\textrm{ is not freely generated by }A\textrm{ and }B_q\right\},
\]
then density and other related questions for this set appear to be unknown. Since the methods of this paper are particularly suited towards understanding accumulation points of non-free numbers in relatively low degree, we ask 

\begin{question}
\label{question:algebraic}
    Is $\mathcal{R}_{\overline{\bbQ}}^{(d)}$ dense in $[-4,4]$ for any $d<\infty$? Does its closure contain any interval or have positive measure?
\end{question}

\section*{Acknowledgements}
This work was initially prepared during the Northwestern University Dynamical Systems REU as a part of the \textit{Dynamical Systems: Classical, Modern and Quantum} RTG (NSF grant DMS-2136217). We thank Aaron Peterson and the other organizers of the REU for the opportunity as well as the seven other participants for the many mathematical conversations and their insights. We especially thank Apoorva Agarwal and Ryan Chatterjee for their patience and time in discussing the topics of this project, Yuchen Liu for an insightful conversation that inspired our use of Diophantine geometry and Joseph Silverman for bringing the work in \cite{poulakis-2003} to our attention.

\section{Background: \texorpdfstring{$n$}{n}-step relations and half-relations} 
\label{sec:def}

\subsection{\texorpdfstring{$n$}{n}-step relation numbers}
\label{sec:step}
In \cite{kimkoberda} Kim and Koberda construct examples of many rational relation numbers $q$ by understanding when relations of a specific form in $G_q$ hold.

\begin{definition}
    Let $F_2=\langle a,b\rangle$ be the free group on two generators. A real number $q$ is an \textit{$n$-step relation number} if there exists a non-trivial word of the form
    \[
    w=b^{m_1}a^{m_2}\ldots b^{m_{2k+1}}\in F_2
    \]
    with $k\in[0,n]$, such that $w(A,B_q)$ is upper-triangular.
\end{definition}

Note that a non-free number is clearly an $n$-step relation number for $n$ equal to the length of a nontrivial word in $A$ and $B_q$ that evaluates to the identity. Conversely, if $q\in\bbQ$ is an $n$-step relation number and $w\in F_2$ is a word as in the above definition, it immediately follows that $q\in\mathcal{R}_\bbQ$ since the word $w' = [waw^{-1},a]\in F_2$ is reduced and $w'(A,B_q)=I_2$. This equivalence is originally attributed to \cite{Lyndon_Ullman_1969}.

The equivalence of being a non-free number and being an $n$-step relation number for some $n$ naturally stratifies the set of all non-free numbers into subsets based, roughly, on how hard it is to exhibit a word in $A$ and $B_q$ that is upper-triangular. Kim and Koberda's results are then a result of using geometric, number-theoretic and computational tools to develop a deep understanding of $1$- and $2$-step relation numbers.
Among these results, Kim and Koberda also give explicit forms of some $1$-step relation numbers.

\begin{lemma}\label{lemma:onestep}[Lemma 3.1 in \cite{kimkoberda}]
    For all nonzero integers $r,s, t$, $\frac{r+t}{rst}$ is a $1$-step relation number.
\end{lemma}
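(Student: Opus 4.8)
The goal is to show that for nonzero integers $r,s,t$, the number $\frac{r+t}{rst}$ is a $1$-step relation number, i.e.\ that there is a nontrivial word $w = b^{m_1}a^{m_2}b^{m_3} \in F_2$ such that $w(A,B_q)$ is upper-triangular, where $q = \frac{r+t}{rst}$. My plan is to carry out the computation directly: take the ansatz $w = B_q^{\,i} A^{\,j} B_q^{\,k}$ for integer exponents $i,j,k$ to be chosen, multiply the three matrices together, and read off the lower-left entry as a function of $i,j,k$ and $q$. Since $A^j = \bigl(\begin{smallmatrix}1 & j\\ 0 & 1\end{smallmatrix}\bigr)$ and $B_q^{\,i} = \bigl(\begin{smallmatrix}1 & 0\\ iq & 1\end{smallmatrix}\bigr)$, the product $B_q^{\,i}A^{\,j}B_q^{\,k}$ has lower-left entry equal to a polynomial expression in $i,j,k,q$; explicitly one finds $(B_q^{\,i}A^{\,j}B_q^{\,k})_{21} = iq + kq + ijkq^2 = q(i + k + ijkq)$. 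Setting this to zero (and insisting $w$ is nontrivial, so not all exponents vanish) gives the condition $i + k + ijkq = 0$, i.e.\ $q = -\frac{i+k}{ijk}$.

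The remaining step is to match this to the target value $\frac{r+t}{rst}$. Choosing $i = -r$, $k = -t$, $j = s$ gives $q = -\frac{-r-t}{(-r)s(-t)} = -\frac{-(r+t)}{rst} = \frac{r+t}{rst}$, which is exactly the desired value; and since $r,s,t$ are nonzero, the exponents $i,j,k$ are nonzero, so $w = B_q^{-r}A^{\,s}B_q^{-t}$ is a genuinely nontrivial word of the required form with $k=1$ (in the notation of the definition, $m_1 = -r$, $m_2 = s$, $m_3 = -t$). Hence $w(A,B_q)$ is upper-triangular and $q$ is a $1$-step relation number.

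There is essentially no obstacle here: the only thing to be careful about is the bookkeeping of signs in the exponents and confirming that the word is nontrivial (which is immediate from the nonvanishing of $r,s,t$) and of the correct syntactic shape $b^{m_1}a^{m_2}b^{m_3}$. One should also double-check the claim implicitly needed downstream — that this $q$ genuinely lies in the relevant range or is nonzero when needed — but for the bare statement of the lemma, the direct matrix computation above suffices. I would present it as a one-paragraph verification: exhibit $w = B_q^{-r}A^sB_q^{-t}$, compute its lower-left entry, and observe it vanishes precisely when $q = \frac{r+t}{rst}$.
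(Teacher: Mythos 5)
Your proof is correct and matches the approach the paper takes: the lemma itself is only cited from Kim--Koberda, but the identical computation appears in the paper's proof of Proposition~\ref{prop:onestep}, where the word $b^{r}a^{-s}b^{t}$ yields lower-left entry $-rstq^2+(r+t)q$ — the same condition you derive, just with the opposite sign convention on the exponents ($B_q^{r}A^{-s}B_q^{t}$ versus your $B_q^{-r}A^{s}B_q^{-t}$). Your matrix arithmetic and the choice $i=-r$, $j=s$, $k=-t$ check out, so nothing further is needed.
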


Note that these examples also imply that $\frac{1}{n}$ is a $1$-step relation number for all $n$ since $\frac{1}{n}=\frac{r+t}{rst}$ for $(r,s,t)=(n,2,n)$. Additionally, though not explicitly mentioned in \cite{kimkoberda}, these examples, in fact, account for all $1$-step relation numbers.

\begin{proposition}
\label{prop:onestep}
    Let $q\neq 0$ be a $1$-step relation number. Then $q=\frac{r+t}{rst}$ for some nonzero integers $r,s,t$.
\end{proposition}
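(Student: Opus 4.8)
The plan is to unwind the definition of a $1$-step relation number and directly compute. A $1$-step relation number $q$ is one for which there is a nontrivial word $w = b^{m_1}a^{m_2}b^{m_3} \in F_2$ (taking $k=1$; the cases $k=0$ giving $w = b^{m_1}$ are degenerate since $B_q^{m_1}$ is upper-triangular only when $q=0$ or $m_1 = 0$) such that $w(A,B_q)$ is upper-triangular. So I would begin by writing out the product
\[
B_q^{m_1} A^{m_2} B_q^{m_3} = \begin{pmatrix}1 & 0\\ m_1 q & 1\end{pmatrix}\begin{pmatrix}1 & m_2\\ 0 & 1\end{pmatrix}\begin{pmatrix}1 & 0\\ m_3 q & 1\end{pmatrix}
\]
explicitly as a $2\times 2$ matrix, and set the $(2,1)$ entry equal to $0$. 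A direct multiplication gives a $(2,1)$ entry of the form $m_1 q + m_3 q + m_1 m_2 m_3 q^2$, so the vanishing condition becomes $m_1 + m_3 + m_1 m_2 m_3 q = 0$ (dividing by $q\neq 0$), i.e.
\[
q = -\frac{m_1 + m_3}{m_1 m_2 m_3}.
\]

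From here the work is bookkeeping. Setting $(r,s,t) = (-m_1, m_2, m_3)$ (or an equivalent sign choice) puts $q$ in the form $\frac{r+t}{rst}$ with $r,s,t$ nonzero integers, provided none of $m_1, m_2, m_3$ is zero. So the real content is to argue that \emph{every} nontrivial relevant word reduces to this case: a nontrivial reduced word of the prescribed syllable form $b^{m_1}a^{m_2}b^{m_3}$ with $q \neq 0$ and $w(A,B_q)$ upper-triangular must, after possibly absorbing trivial syllables, have all three exponents nonzero. If $m_2 = 0$ then $w = b^{m_1 + m_3}$, upper-triangular forces $m_1 + m_3 = 0$ (as $q\neq 0$), and then $w$ is trivial; similarly if $m_1 = 0$ or $m_3 = 0$ the word collapses to $a^{m_2} b^{m_3}$ or $b^{m_1} a^{m_2}$, whose $(2,1)$ entry is $m_3 q$ or $m_1 q$ respectively, again forcing triviality. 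Hence a genuinely nontrivial word of this form has $m_1, m_2, m_3$ all nonzero and we are in the situation above.

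The only mild subtlety — and the step I would be most careful about — is confirming that the definition of $1$-step relation number really does restrict to words of the shape $b^{m_1}a^{m_2}b^{m_3}$ (i.e.\ that one may take $k=1$ without loss, and that the word begins and ends in $b$), so that no other configuration of syllables can arise; this is immediate from the stated definition, which fixes the form $w = b^{m_1}a^{m_2}\cdots b^{m_{2k+1}}$ with $k \in [0,n]$, but it is worth spelling out that for $n=1$ the only possibilities are $k=0$ and $k=1$. Everything else is a one-line matrix computation, so I do not anticipate a genuine obstacle; the proposition is essentially the converse direction of Lemma~\ref{lemma:onestep} made precise, and the two together show $\{q \ne 0 : q \text{ is a } 1\text{-step relation number}\}$ is exactly $\{\frac{r+t}{rst} : r,s,t \in \mathbb{Z}\setminus\{0\}\}$.
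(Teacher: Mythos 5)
Your argument is correct and is essentially the paper's proof: the paper writes the word as $b^r a^{-s} b^t$, computes the product, and reads off $q = \frac{r+t}{rst}$ from the vanishing of the $(2,1)$ entry, exactly as you do (your extra care with the degenerate cases $k=0$ and zero exponents is fine but the paper elides it). One small bookkeeping slip: the substitution $(r,s,t)=(-m_1,m_2,m_3)$ gives $\frac{r+t}{rst}=\frac{m_1-m_3}{m_1m_2m_3}$ rather than $-\frac{m_1+m_3}{m_1m_2m_3}$; the correct choice is $(r,s,t)=(m_1,-m_2,m_3)$, matching the paper's $a^{-s}$ convention.
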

\begin{proof}
    Being a $1$-step relation number means that there are integers $r,s,t$ and a word 
    \[
    w=b^r a^{-s} b^t\in F_2
    \]
    so that $w(A,B_q)$ is upper-triangular. For a word of this form, we see that
    \[
    w(A,B_q)=\begin{pmatrix}
        -s t q + 1 & -s \\
-r s tq^2 + (r+t) q & -rsq + 1
    \end{pmatrix}.
    \]
    We then see that for $q\neq 0$, the above matrix is upper-triangular if and only if
    \[
    -rstq+r+t=0,
    \]
    hence $q=\frac{r+t}{rst}$.
\end{proof}

This proof, though entirely elementary, illustrates a common theme throughout \cite{Smilga2021} and the remainder of this work: Often times, a specific relation in $G_q$ holding can be reduced to $q$ being a root of a particular polynomial. Thus, if one can understand such a polynomial sufficiently well, one can construct a number of examples of new non-free numbers.

\subsection{Smilga half-relations and polynomials}
In \cite{Smilga2021}, the author's examples of large accumulation points of non-free rationals come from searching for other types of relations in $G_q$ satisfying a particular symmetry, as opposed to words in $A$ and $B_q$ that evaluate to an upper-triangular matrix. Similarly as in the proof of Proposition~\ref{prop:onestep}, Smilga then shows that such a relation holding in $G_q$, and thus $q$ being non-free, is equivalent to $q$ being the root of a certain polynomial. The relations studied in \cite{Smilga2021}, deemed \textit{half-relations}, are defined as follows.

\begin{definition}[\cite{Smilga2021} Definition 4]
	A tuple $(a_1, \ldots, a_\ell)\in \bbZ^{\ell}$, with $a_i \neq 0$ for all $i$, is a \textit{half-relation for $q$} if

    \begin{enumerate}
        \item for odd $\ell,$ the matrix $M = A^{a_1}B_q^{a_2}\cdots A^{a_\ell}$ satisfies 

        \[q c_{12}(M) - c_{21}(M) = 0;\]

        \item for even $\ell,$ the matrix $M = A^{a_1}B_q^{a_2}\cdots B_q^{a_\ell}$ satisfies 

        \[c_{11}(M) - c_{22}(M) = 0,\]

        where $c_{ij}(M)$ is the $(i, j)$-th coefficient of $M$.
    \end{enumerate}

    If there exists a half relation $(a_1, \ldots, a_\ell)$ for $q$, then we often say that $q$ is a \textit{length $\ell$ half-relation number}, or just \textit{half-relation number}. 
    \end{definition}

    The connection between these tuples of integers and a relation in $G_q$ comes from the fact that if $(a_1,\ldots,a_\ell)\in\bbZ^\ell$ is a half-relation for $q$, then $A^{a_1}B_q^{a_2}A^{a_3}\cdots C^{a_\ell}=B_q^{a_\ell}A^{a_{\ell-1}}B_q^{a_{\ell-2}}\cdots C^{a_1}$ where $C=A$ when $\ell$ is odd or $C=B_q$ when $\ell$ is even, cf. Proposition 5 of \cite{Smilga2021}. In particular, if $(a_1,\ldots,a_\ell)$ is a half-relation for $q$, then $q$ is non-free. Based on this observation, we arrive at new polynomial conditions on $q$ that determine when certain relations in $G_q$ hold.
  
    \begin{definition}\label{def:PHR}
      For an integer $\ell \ge 0$ and $(a_1, \ldots,a_\ell)\in\bbZ^\ell$ with $a_i\neq 0$, let $M=A^{a_1}B_q^{a_2}\cdots C^{a_\ell}$ where $C=A$ when $\ell$ is odd and $C=B_q$ when $\ell$ is even. Define the polynomial in $q$, $P_{HR}^\ell(a_1,\ldots,a_\ell;q)$ by:
      \[
      P^\ell_{HR}(a_1,\ldots,a_\ell; q)=\begin{cases}
          c_{12}(M)-\frac{1}{q}c_{21}(M) & \ell\textrm{\ is odd},\\
          \frac{1}{q}(c_{11}(M)-c_{22}(M)) & \ell\textrm{\ is even},
      \end{cases}
      \]
    where $c_{ij}(M)$ is the $(i, j)$-th coefficient of $M$. 
    \end{definition}

    Then, we have the following:

    \begin{proposition} For $(a_1,\ldots,a_\ell)\in\bbZ^\ell$ with $a_i \neq 0$,  the following are equivalent:\begin{enumerate}
            \item $(a_1, \ldots, a_\ell)$ is a half-relation for $q$,
            \item $P_{HR}^\ell(a_1, \ldots, a_\ell; q) = 0$.
        \end{enumerate}
\end{proposition}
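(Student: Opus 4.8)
The plan is to observe that the proposition is, modulo one mildly substantive point, just the two defining conditions rewritten after clearing the single factor of $q$ in the denominator of $P^\ell_{HR}$; accordingly I fix a nonzero $q$, which is the only case of interest and matches the hypothesis of Proposition~\ref{prop:onestep}.

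First I would record that every entry $c_{ij}(M)$ of $M=A^{a_1}B_q^{a_2}\cdots C^{a_\ell}$ lies in $\bbZ[q]$, since $A^{\pm 1}$ has integer entries and the entries of $B_q^{\pm 1}$ are among $1$ and $\pm q$. The one point requiring an argument is that $P^\ell_{HR}(a_1,\ldots,a_\ell;q)$ is genuinely a polynomial, i.e.\ that $c_{21}(M)$ (for odd $\ell$) and $c_{11}(M)-c_{22}(M)$ (for even $\ell$) are divisible by $q$ in $\bbZ[q]$. I would obtain this by specializing at $q=0$: there $B_q=I_2$, so $M\big|_{q=0}$ is a product of integer powers of $A$ alone and is therefore unipotent upper-triangular. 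Hence, as polynomials in $q$, both $c_{21}(M)$ and $c_{11}(M)-c_{22}(M)$ vanish at $q=0$ and so are divisible by $q$; this makes $\tfrac1q c_{21}(M)$ and $\tfrac1q\big(c_{11}(M)-c_{22}(M)\big)$ polynomials, so $P^\ell_{HR}(a_1,\ldots,a_\ell;q)\in\bbZ[q]$ is well defined.

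With that in hand the equivalence is immediate. In $\bbZ[q]$ one has $q\,P^\ell_{HR}(a_1,\ldots,a_\ell;q)=q\,c_{12}(M)-c_{21}(M)$ when $\ell$ is odd and $q\,P^\ell_{HR}(a_1,\ldots,a_\ell;q)=c_{11}(M)-c_{22}(M)$ when $\ell$ is even. Evaluating at the fixed nonzero $q$ and dividing by $q$, we see that $P^\ell_{HR}(a_1,\ldots,a_\ell;q)=0$ holds if and only if $q\,c_{12}(M)-c_{21}(M)=0$ in the odd case, respectively $c_{11}(M)-c_{22}(M)=0$ in the even case, and these are precisely the two conditions defining a half-relation for $q$. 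I do not anticipate any real obstacle: the only non-formal step is the divisibility by $q$, and that follows at once from $B_0=I_2$, with the rest being bookkeeping on the matrix entries.
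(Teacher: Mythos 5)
Your proof is correct, and it matches what the paper intends: the paper states this proposition without any proof at all, treating it as an immediate consequence of comparing Definition~4 of \cite{Smilga2021} with Definition~\ref{def:PHR}, which is exactly your "multiply/divide by $q$" bookkeeping. Your two additions are both sound and worthwhile: the specialization $B_0=I_2$ cleanly justifies that $\tfrac1q c_{21}(M)$ and $\tfrac1q(c_{11}(M)-c_{22}(M))$ are honest polynomials, and your restriction to $q\neq 0$ flags a real (if harmless) edge case --- at $q=0$ every tuple is vacuously a half-relation while $P^\ell_{HR}(a;0)$ need not vanish (e.g.\ $P^3_{HR}(1,1,1;0)=1$), so the literal equivalence only holds for $q\neq 0$, the only case of interest.
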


Smilga's examples of large accumulation points of non-free rational numbers come from studying roots of the above polynomials. In \S\ref{PellSection}, we also use a similar strategy to record other accumulations of non-free rational numbers, such as $1+\sqrt{2}$. 

Having discussed both the $\ell$-step relation numbers of \cite{kimkoberda} and the half-relation numbers of \cite{Smilga2021}, Theorem~\ref{thm:main2} can then be more precisely stated as follows.

\begin{theorem}
\label{thm:main}
All $1$-step relation numbers are accumulations of non-constant sequences of length-$5$ rational half-relation numbers.
\end{theorem}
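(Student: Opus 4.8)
The plan is to reduce, via Lemma~\ref{lemma:onestep} and Proposition~\ref{prop:onestep}, to producing for each triple of nonzero integers $(r,s,t)$ a non-constant sequence of length-$5$ rational half-relation numbers converging to $q_0 := \tfrac{r+t}{rst}$, and to carry this out by writing down an explicit one-parameter family of integer tuples $(a_1,\dots,a_5)$ whose associated polynomial $P^5_{HR}$ has a rational root near $q_0$. Since the $1$-step relation numbers are exactly the numbers $\tfrac{r+t}{rst}$, this gives the theorem.

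First I would compute $P^5_{HR}$ in length $5$: multiplying out $M = A^{a_1}B_q^{a_2}A^{a_3}B_q^{a_4}A^{a_5}$ shows $P^5_{HR}(a_1,\dots,a_5;q) = \alpha q^2 + \beta q + \gamma$ with $\alpha = a_1a_2a_3a_4a_5$, $\gamma = a_1+a_3+a_5-a_2-a_4$, and $\beta = a_1a_2a_3+a_1a_2a_5+a_1a_4a_5+a_3a_4a_5-a_2a_3a_4$. The key point is that, unlike the situations elsewhere in the paper where rationality of the roots forces a discriminant to be a perfect square, here one can simply force the \emph{constant} term to vanish: if $\gamma = 0$ then $P^5_{HR} = q(\alpha q + \beta)$, so $-\beta/\alpha$ is automatically rational, and dividing through gives $-\beta/\alpha = -\tfrac1{a_1a_2}-\tfrac1{a_2a_3}-\tfrac1{a_3a_4}-\tfrac1{a_4a_5}+\tfrac1{a_1a_5}$. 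I would then exploit the identity $q_0 = \tfrac{r+t}{rst} = \tfrac1{rs}+\tfrac1{st}$ and set
\[
(a_1,a_2,a_3,a_4,a_5) = (r,\ -s,\ t,\ r+s+t+n,\ n),
\]
which has $\gamma = 0$ and all entries nonzero once $n\notin\{0,-(r+s+t)\}$. The resulting rational root is
\[
q_n = \frac1{rs}+\frac1{st} + \frac1{rn} - \frac1{n(r+s+t+n)} - \frac1{t(r+s+t+n)},
\]
which is a length-$5$ rational half-relation number by the equivalence of half-relations with vanishing of $P^5_{HR}$, and $q_n \to q_0$ as $n\to\infty$.

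It remains to see that $(q_n)$ is non-constant. One computes
\[
q_n - q_0 = \frac1{r+s+t+n}\cdot\frac{n(t-r)+t(s+t)}{rnt},
\]
so $q_n$ is a non-constant rational function of $n$ — hence assumes infinitely many values — unless $r = t$ and $s = -t$. In that exceptional case $q_0 = -2/t^2$, and I would dispose of these values separately: when $t^2\neq 1$ the same number $q_0$ admits another representation $\tfrac{r'+t'}{r's't'}$ (for instance $(2,u^2,-1)$ if $t = 2u$, or $(1,-t^2,1)$ if $t$ is odd) to which the construction above applies non-trivially; and the single leftover value $q_0 = -2$ — which, one checks, has \emph{only} the degenerate representations of shape $(t,-t,t)$ — can instead be reached by the family $(1,1,1,\,1+n+d,\,n)$, whose discriminant simplifies to $\big(2n(n+1+d)\big)^2 + d^2$ and becomes a perfect square once $n$ and $d$ are tied together through solutions of the Pell equation $g^2 = 2f^2 - 1$, yielding rational roots $-2 + 1/f^2 \to -2$.

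The substantive content is the length-$5$ expansion of $P^5_{HR}$ and the realization that imposing $\gamma = 0$ trivializes the Diophantine obstruction while still leaving enough room to steer the rational root to an arbitrary $1$-step relation number; after that the argument is a finite check of polynomial identities. I expect the only real nuisance to be the value $q_0 = -2$: because every way of writing $-2$ as $\tfrac{r+t}{rst}$ is of the degenerate shape $(t,-t,t)$, it cannot be handled by re-choosing the triple and genuinely needs its own family, which is where a Pell equation has to be invoked.
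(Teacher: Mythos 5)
Your proof is correct, but it takes a genuinely different and more elementary route than the paper. The paper keeps the constant term $c_0=a_1-a_2+a_3-a_4+a_5$ generic, so rationality of the roots becomes the Diophantine condition that the discriminant $\Delta_a$ be a perfect square; this is packaged as an integral point on a conic $C_a$ and handled with Silverman's theorem (as corrected by Poulakis), after which the iterated limits $a_1\to\infty$, then $a_2\to\infty$ of $q_a^-$ produce $\tfrac{-(a_3+a_5)}{a_3a_4a_5}$, i.e.\ every $1$-step relation number. You instead set $c_0=0$, so that $P^5_{HR}=q(c_2q+c_1)$ factors and the nonzero root $-c_1/c_2=-\tfrac1{a_1a_2}-\tfrac1{a_2a_3}-\tfrac1{a_3a_4}-\tfrac1{a_4a_5}+\tfrac1{a_1a_5}$ is rational for free; the family $(r,-s,t,r+s+t+n,n)$ then gives a completely explicit sequence $q_n\to\tfrac1{rs}+\tfrac1{st}=\tfrac{r+t}{rst}$ with an exact formula for $q_n-q_0$. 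What your approach buys is the elimination of all Diophantine geometry from the main case and full explicitness (including the rate of convergence); what it costs is the case analysis the paper never has to face: your sequence degenerates exactly when $(r,s,t)=(t,-t,t)$, and you correctly isolate $q_0=-2$ as the one value all of whose representations are degenerate. The paper's heavier machinery, by contrast, also yields the intermediate (generally irrational) accumulation points $\lim_{a_1\to\infty}q_a^\pm$ and extends to longer half-relations (e.g.\ the accumulation point $3$ in degree $3$), which your factorization trick does not.

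Your Pell step for $q_0=-2$ is asserted rather than derived, but it does check out: taking $g^2=2f^2-1$, $d=g^2$ and $n=f(g-f)$ gives the tuple $(1,1,1,f(f+g),f(g-f))$, whose discriminant $\bigl(2n(n+1+d)\bigr)^2+d^2=4f^4(f^2-1)^2+(2f^2-1)^2=(2f^4-2f^2+1)^2$ is a perfect square, with roots $\tfrac{1}{f^2-1}$ and $-2+\tfrac1{f^2}\to -2$. Spelling out this choice of $(n,d)$ is the only detail you would need to add to make the argument complete.
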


\section{Diophantine geometry and half-relations}\label{DiophantineSection}

To prove Theorem~\ref{thm:main}, we apply a result in Diophantine geometry to appropriate half-relation polynomials, $P_{HR}^5(a_1,\ldots,a_5;q)$, whose roots limit to $1$-step relation numbers. Specifically, the polynomials $P_{HR}^5(a_1,\ldots,a_5;q)$ are quadratic in $q$ and, solving for their roots, we find many $a_1,\ldots,a_5$ so that the discriminant is a square integer and so that the roots accumulate at $1$-step relation numbers. We note that considering other quadratic polynomial conditions such as being a length $6$ half-relation number yielded no significantly new examples. For larger length half-relation that might correspond to $n$-step relation numbers, the Diophantine obstruction becomes much harder to parse.

\begin{definition}
\label{def:curve}
Let $a = (a_{1}, \dots, a_{5}) \in \mathbb{Z}^{5}$. We define a curve $C_a$ as follows. The discriminant of $P_{\text{HR}}^{5}(a_{1}, \dots, a_{5}; q)$ is
\[
	\Delta_{a} = (a_{3} a_{4} a_{5})^{2} + (2 a_{3} a_{4}^{2} a_{5}^{2} - 2 a_{2} a_{3} a_{4} a_{5}^{2} - 2 a_{2} a_{3}^{2} a_{4} a_{5}) a_{1} + ((a_{2} a_{3}^{2}) + (a_{2} a_{5})^{2} + 2 a_{2} a_{4} a_{5}^{2} + 2 a_{2}^{2} a_{3} a_{5} - 2 a_{2} a_{3} a_{4} a_{5}) a_{1}^{2}
\]
If we consider $a_2, a_3, a_4, a_5$ to be fixed and $a_1$ to be variable, then the discriminant $\Delta_{a}$ is a square integer if and only if there exists an integer $y$ such that $(x, y) = (a_1, y)$ is a root of a polynomial of the form
\[
	F_{a}(x, y) = \alpha_{a} x^{2} + \beta_{a} x + \gamma_{a} - y^{2}.
\]
We define $C_{a}$ to be the zero locus of $F_{a}$.
\end{definition}

In the proof of \cref{thm:main}, we use the fact that most of these $C_{a}$ have infinitely many integer points. Precisely,

\begin{lemma} \label{lem:dioph}
	For each non-zero  $a_{3}, a_{4}, a_{5} \in \mathbb{Z}$ there are infinitely many $a_{1}, a_{2} \in \mathbb{Z}$ such that $C_{a}$ has infinitely many integer points.
\end{lemma}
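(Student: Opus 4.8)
The plan is to analyze the conic $C_a \colon y^2 = \alpha_a x^2 + \beta_a x + \gamma_a$ and exploit the fact that a conic of this shape over $\mathbb{Z}$ has infinitely many integral points precisely when it is (up to the obvious obstructions) a Pell-type equation: this happens when the leading coefficient $\alpha_a$ is a positive non-square integer and the curve has at least one integral point to serve as a ``seed.'' Concretely, completing the square in $x$ turns $y^2 = \alpha_a x^2 + \beta_a x + \gamma_a$ into $(2\alpha_a y)^2 = (2\alpha_a x + \beta_a)^2 \cdot \alpha_a \cdot(\text{something})$ — more precisely into an equation of the form $u^2 - \alpha_a v^2 = D_a$ for suitable integer linear substitutions $u, v$ in $y, x$ and a constant $D_a$ depending on $a$. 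A Pell equation $u^2 - \alpha_a v^2 = D_a$ with $\alpha_a > 0$ not a perfect square and with one solution has infinitely many, obtained by acting via the infinite unit group of $\mathbb{Z}[\sqrt{\alpha_a}]$; one then has to check this infinite family maps back to genuinely infinitely many integer points $(x,y)$ on $C_a$, which it does since the substitution is affine-linear with nonzero determinant.

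So the two things I must arrange, by choosing $a_1, a_2$ appropriately for fixed nonzero $a_3, a_4, a_5$, are: (i) $\alpha_a$ is a positive integer that is not a perfect square, and (ii) $C_a$ has at least one integer point. For (ii), the natural move is to recall the origin of these polynomials: $\Delta_a$ is the discriminant of $P_{HR}^5(a_1,\dots,a_5;q)$, and by Lemma~\ref{lemma:onestep} and the surrounding discussion, there are choices of the parameters for which $P_{HR}^5$ has a rational (indeed, one-step relation) root, forcing $\Delta_a$ to be a perfect square for that choice of $a_1$ — equivalently giving an integer point on $C_a$. In fact one expects a whole one-parameter family of ``trivial'' integer points coming from degenerate tuples, e.g. taking $a_1$ so that the length-$5$ half-relation collapses to a shorter relation, which should produce a point on $C_a$ for infinitely many $a_1$ with a matching $a_2$; pinning down one such explicit point as a function of $a_3, a_4, a_5$ is the bookkeeping core of the argument. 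For (i), $\alpha_a = a_2 a_3^2 + (a_2 a_5)^2 + 2a_2 a_4 a_5^2 + 2 a_2^2 a_3 a_5 - 2 a_2 a_3 a_4 a_5$ is a polynomial in $a_2$ (with $a_3, a_4, a_5$ fixed) of degree $2$ with leading coefficient $a_5^2 + 2a_3 a_5 = a_5(a_5 + 2a_3)$; as $a_2 \to \infty$ this grows, is eventually positive (after possibly replacing $a_2$ by $-a_2$, and excluding the degenerate case $a_5 + 2a_3 = 0$ which must be handled separately by looking at the linear term), and avoids perfect squares for infinitely many $a_2$ since a non-constant integer polynomial takes non-square values infinitely often — this last fact is elementary (a quadratic polynomial agreeing with $m^2$ for all large arguments would have to be a perfect square polynomial, which one rules out by comparing coefficients).

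The key step, and the one I expect to be the main obstacle, is (ii): exhibiting an integer point on $C_a$ uniformly in the parameters. It is not enough that \emph{some} $\Delta_a$ is a square for \emph{some} sporadic $a$; I need, for each fixed nonzero $(a_3,a_4,a_5)$, infinitely many pairs $(a_1,a_2)$ each yielding a curve with a seed point. The cleanest route is to find an identity — a degenerate or ``obvious'' half-relation — that, for a free choice of one parameter, makes $P_{HR}^5(a_1,\dots,a_5;q)$ factor with a rational root; this will express $a_1$ (and the forced value of $a_2$) explicitly and simultaneously produce the integer point $(a_1, y_0)$ on $C_a$ with $y_0$ a polynomial in the parameters. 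Once that seed is in hand, the Pell machinery from the first paragraph takes over and the proof concludes; the final packaging is just checking that the chosen $a_2$-values can be taken to avoid squares in $\alpha_a$ simultaneously, which is immediate since we are intersecting two cofinite conditions on $a_2$.
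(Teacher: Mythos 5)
Your overall architecture matches the paper's: fix $a_3,a_4,a_5$, exhibit a seed integral point on $C_a$, and control the conic's behavior at infinity (for you, via positivity and non-squareness of $\alpha_a$ followed by Pell descent; for the paper, via \cref{silverman}). But the step you yourself flag as ``the main obstacle'' --- producing the seed point --- is exactly the step you leave undone, and it is the crux of the lemma. The resolution is a one-line observation: the constant term of $\Delta_a$ as a polynomial in $a_1$ is $\gamma_a = (a_3a_4a_5)^2$, already a perfect square, so $(0,\,a_3a_4a_5)$ is an integer point of $C_a$ for \emph{every} choice of $a_2,a_3,a_4,a_5$, and it is non-singular since $\partial F_a/\partial y=-2y\neq 0$ there. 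No search through degenerate half-relations or appeal to \cref{lemma:onestep} is needed; the value $x=a_1=0$ corresponds to no admissible half-relation tuple, but it is still a point of the curve, which is all the Diophantine input requires. Without this (or some substitute) your argument does not close.

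Two further points. First, a computational slip: one has $\alpha_a=(a_2a_3+a_2a_5+a_4a_5)^2-4a_2a_3a_4a_5$, so its leading coefficient as a polynomial in $a_2$ is $(a_3+a_5)^2$, not $a_5(a_5+2a_3)$ (the displayed formula in \cref{def:curve} contains typos; compare the expression under the square root in the proof of \cref{thm:main}). In particular positivity of $\alpha_a$ for large $a_2$ is automatic unless $a_3=-a_5$, which is the genuine exceptional case to treat via the lower-order terms; your exceptional case $a_5=-2a_3$ is not the right one. Second, your reduction to a Pell equation $u^2-\alpha_a v^2=D_a$ is a legitimate, more elementary substitute for the paper's citation of \cref{silverman}, and your insistence that $\alpha_a$ be a non-square is actually a point in your favor: when $\alpha_a$ is a positive perfect square the two points at infinity are rational rather than conjugate over a real quadratic field, and the conclusion can fail (e.g.\ $y^2=x^2+1$ has only two integer points), a case the paper's write-up glosses over. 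However, ``the substitution is affine-linear with nonzero determinant'' does not dispose of the return trip from Pell solutions $(u,v)$ to integral $(x,y)$: one must pass to the finite-index subgroup of the unit group preserving the congruence class of $u$ modulo $2\alpha_a$ and of $v$ modulo $2$. That is standard but not free, and it is precisely what the cited theorem packages.
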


We defer proof of this lemma (by Diophantine geometry) until after we use it to prove \cref{thm:main}.

\begin{proof}[Proof of \cref{thm:main}] 
Let $a = (a_{1}, \dots, a_{5}) \in \mathbb{Z}^{5}$. The length $5$ half-relation numbers are exactly the roots $q_{a}^{+}, q_{a}^{-}$ of the polynomials
\begin{align*}
	P_{\text{HR}}^5(a_1, a_2, a_3, a_4, a_5; q) & =  (a_1a_2a_3a_4a_5)q^2+(a_1a_2a_3-a_2a_3a_4+a_1a_2a_5+a_1a_4a_5+a_3a_4a_5)q\\
     & +(a_1-a_2+a_3-a_4+a_5) \\
     & = c_{2} q^{2} + c_{1} q + c_{0} 
\end{align*}
for $a \in \mathbb{Z}^{5}$. Specifically, let
\begin{align*}
	q_{a}^{+} & = \frac{- c_{1} + \sqrt{c_{1}^{2} - 4 c_{0} c_{2}}}{2 c_{0}} & q_{a}^{-} & = \frac{- c_{1} - \sqrt{c_{1}^{2} - 4 c_{0} c_{2}}}{2 c_{0}}
\end{align*}
The roots $q_{a}^{\pm}$ are rational exactly when the discriminant of $P_{\text{HR}}^{5}(a_{1}, \dots, a_{5}; q)$ is a square integer. 

\cref{def:curve} expresses the condition that $\Delta_{a}$ is a square integer as an integer point on the curve $C_{a}$. In \cref{lem:dioph}, we show that for each $a_{3}, a_{4}, a_{5} \in \mathbb{Z}$, there are infinitely many $a_{1}, a_{2} \in \mathbb{Z}$ such that $C_{a}$ has infinitely many integer points. We claim that this suffices to prove \cref{thm:main}. 

For each $a_{3}, a_{4}, a_{5}$, there are sufficiently many (infinitely many) rational roots $q_{a}^{-}$ of $P_{\text{HR}}^{5}(a_{1}, \dots, a_{5}; q)$. Then the limit of the rational $q_{a}^{-}$ agrees with the limit of all $q_{a}^{-}$. Specifically, each
\begin{align*}
	\lim_{a_1\rightarrow \infty} q_a^{\pm} & = \frac{-\big(a_2a_3+a_2a_5+a_4a_5\big) \pm \sqrt{(a_2a_3)^2+(a_2a_5)^2+(a_4a_5)^2+2a_2a_4a_5^2+2a_2^2a_3a_5-2a_2a_3a_4a_5}}{2a_2a_3a_4a_5}
\end{align*}
and each
\begin{align*}
	\lim_{a_{2} \to \infty} \lim_{a_{1} \to \infty} q_{a}^{-} & = \frac{-(a_{3} + a_{5})}{a_{3} a_{4} a_{5}} & \lim_{a_{2} \to \infty} \lim_{a_{1} \to \infty} q_{a}^{+} & = 0
\end{align*}
is a limit of rational length $5$ half relation numbers. Note that even on taking further limits, we only get $1$-step relation numbers as limits --- we have
\[
	\lim_{a_{3} \to \infty} \lim_{a_{2} \to \infty} \lim_{a_{1} \to \infty} q_{a}^{-} =  \frac{-1}{a_{4} a_{5}}
\]

Here, we chose a specific order in which to take limits. However, this order is not necessary to obtain the $1$-step relation numbers of Kim--Koberda as limit points.
\end{proof}

\begin{remark}
    After taking the limit in \(a_1\), we already obtain a family of limit points. In \cite{Smilga2021}, $\pm\frac{3+\sqrt{5}}{2}$ is shown to be a limit point of this form. Specifically, it is an accumulation point of the half-relation numbers associated to \(a=(1, -1, 1, -1, N)\) for appropriate $N$ and is the maximum limit point for length 5 half-relation numbers, up to permutation of \(a_i\)'s. 
\end{remark}

To prove \cref{lem:dioph}, we require conditions under which a conic curve like $C_{a}$ has infinitely many integer points. We use a special case of Theorem A from \cite{SILVERMAN2000163}. We note that this theorem, as stated in the paper, required a small correction found in \cite{poulakis-2003}. We thank Joseph Silverman for pointing this nuance out to us during the preparation of this work. The statement below is equivalent to the corrected statement found in \cite{poulakis-2003}.

\begin{theorem}\label{silverman}
Let \(C\) be a curve defined by \(\alpha x^2 + \beta x+ \gamma-y^2=0\). The set of integer points of $C$, $C(\mathbb{Z})$ has infinitely many points if $C(\mathbb{Z})$ contains at least one non-singular point and either
\begin{enumerate}
	\item the set of points at infinity, $C_{\infty}$ consists of one non-singular rational point

	\item $C_{\infty}$ consists of two non-singular points which are conjugate over a real quadratic field
\end{enumerate}
\end{theorem}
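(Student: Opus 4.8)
The plan is to split the hypothesis into its two cases by reading off the behavior of the conic at infinity, and then to reduce each case to a classical fact: an explicit polynomial family of integer solutions in the parabolic case, and propagation of solvability for a generalized Pell equation in the hyperbolic case. Throughout I take $\alpha,\beta,\gamma\in\mathbb{Z}$, as is implicit in speaking of integer points. Homogenizing $F(x,y)=\alpha x^2+\beta x+\gamma-y^2$ to $\widetilde F(X,Y,Z)=\alpha X^2+\beta XZ+\gamma Z^2-Y^2$, the locus at infinity is cut out on $\{Z=0\}$ by $\alpha X^2=Y^2$. If $\alpha=0$ this is the single point $[1:0:0]$, and computing $\nabla\widetilde F$ there shows it is non-singular exactly when $\beta\neq 0$; if $\alpha\neq 0$ there are two points $[1:\pm\sqrt\alpha:0]$, both non-singular, and $\mathbb{Q}(\sqrt\alpha)$ is a real quadratic field exactly when $\alpha>0$ and $\alpha$ is not a perfect square. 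So hypothesis (1) is precisely the case $\alpha=0$, $\beta\neq0$, and hypothesis (2) is precisely the case $\alpha>0$ non-square; I would dispose of these separately. A short computation locating the only possible affine singular point — namely $(-\beta/2\alpha,0)$, present iff $\beta^2-4\alpha\gamma=0$ — lets the non-singularity hypothesis on the given affine point do its job of excluding the degenerate sub-case in (2).

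In case (1), write the given point as $(x_0,y_0)$ with $\beta x_0+\gamma=y_0^2$. For $k\in\mathbb{Z}$ put $y_k:=y_0+\beta k$ and $x_k:=x_0+2y_0k+\beta k^2$; then
\[
y_k^2=y_0^2+\beta\bigl(2y_0k+\beta k^2\bigr)=\beta x_k+\gamma,
\]
so $(x_k,y_k)\in C(\mathbb{Z})$, and since $\beta\neq 0$ the values $y_k$ are pairwise distinct. Hence $C(\mathbb{Z})$ is infinite. (This uses non-singularity of the point at infinity exactly in the guise $\beta\neq0$.)

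In case (2), I would complete the square: multiplying $y^2=\alpha x^2+\beta x+\gamma$ by $4\alpha$ and setting $u=2\alpha x+\beta$, $D=4\alpha$, $N=\beta^2-4\alpha\gamma$ turns an integer point of $C$ into an integer solution of the generalized Pell equation $u^2-Dy^2=N$ subject to $u\equiv\beta\pmod{2\alpha}$, and conversely. Here $D>0$ is a non-square because $\alpha$ is, and $N\neq0$ by the non-singularity hypothesis, so the given solution $(u_0,y_0)$ has $u_0+y_0\sqrt D\neq 0$. Let $\varepsilon=p+q\sqrt D>1$ be the fundamental solution of $p^2-Dq^2=1$, which exists since $D$ is a positive non-square. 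The image of $\varepsilon$ in the finite ring $\mathbb{Z}[\sqrt D]/2\alpha\mathbb{Z}[\sqrt D]$ is a unit (its norm is $1$), hence has some finite multiplicative order $n_0$; write $\varepsilon^{n_0}=P+Q\sqrt D$, so $P\equiv1$ and $Q\equiv0\pmod{2\alpha}$. For $m\ge0$ define $(u_m,y_m)\in\mathbb{Z}^2$ by $u_m+y_m\sqrt D=(u_0+y_0\sqrt D)\varepsilon^{n_0m}$; each $(u_m,y_m)$ solves $u^2-Dy^2=N$, satisfies $u_m\equiv u_0\equiv\beta\pmod{2\alpha}$ (each multiplication by $\varepsilon^{n_0}$ preserves the first coordinate mod $2\alpha$), and the absolute values $\lvert u_0+y_0\sqrt D\rvert\,\varepsilon^{n_0m}$ are strictly increasing in $m$, so the $(u_m,y_m)$ are distinct. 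Setting $x_m=(u_m-\beta)/2\alpha\in\mathbb{Z}$ then gives infinitely many points of $C(\mathbb{Z})$.

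The standard inputs — existence of the fundamental Pell solution, the product law on solutions of the norm-form equation, and the elementary identities — I would take for granted. The step that needs genuine care, and the only real obstacle, is the bookkeeping in case (2): ensuring the lattice condition $u\equiv\beta\pmod{2\alpha}$ survives the process of generating new Pell solutions (handled by replacing $\varepsilon$ with $\varepsilon^{n_0}$), and, on the geometric side, checking that the stated dichotomy in (1)–(2) really does match the possible configurations of $C_\infty$, so that the cases genuinely outside the hypotheses — $\alpha$ a positive perfect square, where $C$ is a hyperbola with rational asymptotes and $C(\mathbb{Z})$ is finite, or $\alpha<0$, the elliptic case — are correctly seen to be excluded rather than counterexamples.
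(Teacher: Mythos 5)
Your argument is correct, but the comparison here is necessarily of a different kind: the paper does not prove this statement at all. It is imported verbatim as a special case of Theorem A of \cite{SILVERMAN2000163} (as corrected in \cite{poulakis-2003}) and used as a black box in the proof of \cref{lem:dioph}. What you have written is a self-contained elementary proof of exactly the special case the paper needs. Your translation of the two hypotheses into the cases $\alpha=0,\ \beta\neq 0$ (parabolic, one rational non-singular point at infinity) and $\alpha>0$ a non-square (two points at infinity conjugate over the real quadratic field $\mathbb{Q}(\sqrt{\alpha})$) is the right reading, and both constructions check out: the explicit family $(x_k,y_k)=(x_0+2y_0k+\beta k^2,\ y_0+\beta k)$ in the parabolic case, and, in the hyperbolic case, the substitution $u=2\alpha x+\beta$ giving $u^2-4\alpha y^2=\beta^2-4\alpha\gamma$ with the lattice condition $u\equiv\beta\pmod{2\alpha}$, which you propagate by multiplying by a power $\varepsilon^{n_0}\equiv 1\pmod{2\alpha}$ of the fundamental unit; this preserves the norm equation and the congruence while strictly increasing $\lvert u+y\sqrt{D}\rvert$, so the points are distinct. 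The one compressed spot is the role of the non-singular integer point in case (2): the cleanest phrasing is that non-singularity of the given affine point says exactly $(u_0,y_0)=(2\alpha x_0+\beta,\,y_0)\neq(0,0)$, hence $u_0+y_0\sqrt{D}\neq 0$, and then $N=u_0^2-Dy_0^2\neq 0$ follows automatically because $D=4\alpha$ is not a square; as written you route this through $N\neq 0$ first, which is true but not quite how the hypothesis enters. The citation buys the paper brevity and generality (Silverman's theorem treats general conics and comes with counting information), while your proof buys transparency: for the specific curves $F_a$ of \cref{def:curve} the classical Pell machinery suffices, and your case analysis also makes visible why the excluded configurations --- $\alpha$ a positive perfect square (rational asymptotes, finitely many integer points) and $\alpha<0$ --- must indeed be excluded rather than being counterexamples.
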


\begin{proof}[Proof of \cref{lem:dioph}]
	Fix non-zero $a_{3}, a_{4}, a_{5} \in \mathbb{Z}$. To show that there are infinitely many $a_{1}, a_{2} \in \mathbb{Z}$ such that $C_{a}$ has infinitely many integer points, we show that there are infinitely many $a_{1}, a_{2}$ such that $C_{a}$ satisfies the hypotheses of \cref{silverman}.

First, we construct a non-singular point in each $C_{a}(\mathbb Z)$. Any singular point $(x, y) \in C_{a}$ must satisfy the following equations (in addition to $F_{a}(x, y) = 0$).
\begin{align*}
	\frac{\partial F_{a}}{\partial x} & = 0  && \iff & x & = -\frac{\beta}{2 \alpha} \\
	\frac{\partial F_{a}}{\partial y} & = 0 & & \iff & y & =0
\end{align*}
Consider the point $(0, a_{3} a_{4} a_{5})$. It's easy to verify $(0, a_{3} a_{4} a_{5}) \in C_{a}(\mathbb Z)$, and yet for non-zero  $a_{3}, a_{4}, a_{5}$, its partial ${\partial F_{a}}/{\partial y}(0, a_{3} a_{4} a_{5}) = a_{3} a_{4} a_{5} \neq 0$. Thus, each $C_{a}(\mathbb Z)$ has a non-singular point.

We compute the projective coordinates for the set of points at infinity \(C_{a,\infty}\), by homogenizing $F_{a}$ to
\[
	F_a^{1}(x : y : z) = \alpha_{a} x^{2} + \beta_{a} x z + \gamma_{a} z^{2} - y^{2}
\]
The points at infinity are exactly those points $(x : y : 0)$ with $F(x : y : 0) = 0$. We get
\begin{align*}
	\alpha_a x^2 & = y^2 \\
	y & = \pm x\sqrt{(a_2a_3)^2+(a_2a_5)^2+(a_4a_5)^2+2a_2a_4a_5^2+2a_2^2a_3a_5-2a_2a_3a_4a_5}.
\end{align*}

We claim these points are all non-singular. Notice that $\partial F^{1}_{a} / \partial y = 2y$. If $z = 0$ and $\partial F^{1}_{a} / \partial y = 0$, we have $y = 0$, and thus, $x = \pm \sqrt{\alpha_{a}} y = 0$. However, $(0 : 0 : 0)$ is not a point. Thus, all $(x : y : z)$ on $C_{a, \infty}$ are non-singular.

When \(\alpha_a\) is zero, we have one point at infinity, namely \((1 : 0 : 0)\). When \(\alpha_a\) is strictly positive, $C_a$ has two points at infinity that are conjugate over a real quadratic field. Thus,  \(C_a\) satisfies the conditions of \cref{silverman} except when $\alpha_a < 0$. We claim that for each $a_{3}, a_{4}, a_{5}$, $\alpha_{a} > 0$ when $a_{2}$ is sufficiently large.

Writing $\alpha_a$ as a function of $a_{2}$ we obtain
\[
	\alpha_a = (a_{3} + a_{5})^{2} a_{2}^{2} + O(a_{2}).
\]
Thus, if $a_{3} \neq - a_{5}$, for sufficiently large $a_{2}$ we have $\alpha_a > 0$. Else, if $a_{3} = -a_{5}$ we have $\alpha_a = (a_{4} a_{5})^{2}$, and thus, $\alpha_a > 0$ because \(a_4, a_5\) are nonzero. Thus, for each $a_{3}, a_{4}, a_{5} \in \mathbb{Z}$ there are infinitely many $a_{2}$ such that $C_{a}(\mathbb{Z})$ is infinite.
\end{proof}

With an eye towards Question~\ref{question:algebraic}, we note that while the Diophantine obstructions become much harder to parse in larger degree, these methods can be used to establish many other examples of limit points of \textit{algebraic} non-free numbers. For instance, while it is still unknown if $3$ is a limit of non-constant sequences of rational non-free numbers, we were able to show, using similar methods, that the largest real roots of the polynomials $P^7_{HR}(1,-1,1,-1,1,N,N;q)$ accumulate at $3$, showing that it is an accumulation point of $\mathcal{R}_{\overline{\bbQ}}^{(3)}$.

\section{Other Families of Rational Non-free Numbers}

\subsection{Partial Sums of Geometric Series}\label{PartialSumsSubSection}

We will show that rational numbers of the form
\begin{align}\label{eq:geomser1}
    (k-1)\sum_{n = s}^tk^{-n}
\end{align}
for positive integers $k, s, t$  have a length $3$ half-relation. We came across this form of rational number when considering the base $3$ expansions of the very simplest rational elements of the Cantor middle third set. Note that not all elements of the Cantor set will be relation numbers, in particular, transcendental $q$ are free. Numbers in the Cantor Set have a base $3$ expansion that contains only $0$'s and $2$'s. The very simplest of these are those that begin with a block of $0's$, and then a block of $2's$. For example, $(.2)_3$ or $(.00022222)_3$. These take the form of $(\ref{eq:geomser1})$ where $k = 3$. 

\begin{proposition}
    Let $k, t, s$ be positive integers with $t \ge s$ and set $q = (k-1)\sum_{n = s}^tk^{-n}$. Then, we have a length $3$ half-relation $(1, k^{-s + t}(k^s + k), k^{s - 1})$.
\end{proposition}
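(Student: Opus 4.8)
The plan is to reduce the statement to the single polynomial identity packaged in the definition of a length-$3$ half-relation, and then verify that identity by a short matrix computation after summing the geometric series. Recall that $(a_1,a_2,a_3)\in\bbZ^3$ with all $a_i\neq 0$ is a length-$3$ half-relation for $q$ precisely when $M=A^{a_1}B_q^{a_2}A^{a_3}$ satisfies $q\,c_{12}(M)-c_{21}(M)=0$. So the first step is to compute $M$ explicitly: since $A^{a_1}=\left(\begin{smallmatrix}1 & a_1\\ 0 & 1\end{smallmatrix}\right)$, $B_q^{a_2}=\left(\begin{smallmatrix}1 & 0\\ a_2 q & 1\end{smallmatrix}\right)$ and $A^{a_3}=\left(\begin{smallmatrix}1 & a_3\\ 0 & 1\end{smallmatrix}\right)$, multiplying out gives $c_{12}(M)=a_1+a_3+a_1a_2a_3 q$ and $c_{21}(M)=a_2 q$. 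Hence
\[
q\,c_{12}(M)-c_{21}(M)=q\big(a_1a_2a_3\,q+a_1+a_3-a_2\big),
\]
so $(a_1,a_2,a_3)$ is a half-relation for $q$ as soon as $a_1a_2a_3\,q=a_2-a_1-a_3$ (the other factor of the product, $q=0$, painlessly absorbs the degenerate case $k=1$, for which the polynomial $P^3_{HR}$ with its division by $q$ is not directly applicable).

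The second step is to put $q$ in closed form. Summing the geometric series,
\[
q=(k-1)\sum_{n=s}^{t}k^{-n}=(k-1)\cdot\frac{k^{-s}-k^{-t-1}}{1-k^{-1}}=k^{1-s}-k^{-t},
\]
and in particular $q\neq 0$ whenever $k\geq 2$, since $t\geq s$ forces $1-s\neq -t$.

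The final step is to substitute the claimed tuple $(a_1,a_2,a_3)=\big(1,\;k^{t-s}(k^s+k),\;k^{s-1}\big)$ — whose entries are all nonzero because $k\geq 1$ — and check the identity $a_1a_2a_3\,q=a_2-a_1-a_3$. One has $a_1a_2a_3=k^{t-s}(k^s+k)\cdot k^{s-1}=k^{t}(k^{s-1}+1)$, so
\[
a_1a_2a_3\,q=k^{t}(k^{s-1}+1)\big(k^{1-s}-k^{-t}\big)=(k^{t-s+1}-1)(k^{s-1}+1)=k^{t}+k^{t-s+1}-k^{s-1}-1,
\]
which is exactly $a_2-a_1-a_3=k^{t-s}(k^s+k)-1-k^{s-1}$. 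This establishes the half-relation.

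There is no serious obstacle here: the proof is a finite matrix multiplication together with a two-line algebraic identity. The only point that benefits from care is the order of operations — factoring $q$ out of the half-relation condition at the outset both isolates the identity one actually needs and handles $k=1$ uniformly, and it is cleaner to reduce $q$ to the closed form $k^{1-s}-k^{-t}$ before checking the identity than to drag the partial sum through the computation. (As a sanity check one can note that the same identity $a_1a_2a_3\,q=a_2-a_1-a_3$ in fact holds for $k=1$ as well, both sides vanishing, so strictly speaking no case split is needed.)
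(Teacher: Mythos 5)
Your proof is correct and follows essentially the same route as the paper: sum the geometric series to get $q=k^{1-s}-k^{-t}$, then verify the linear identity $a_1a_2a_3\,q+a_1-a_2+a_3=0$ for the given tuple (your matrix computation just rederives the formula for $P^3_{HR}$ that the paper quotes). The extra attention to the degenerate case $k=1$ is harmless but not needed beyond what the paper does.
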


\begin{example} Before the proof, we demonstrate some examples.
    \begin{itemize}
        \item $q = (.222)_3 = 2/3 + 2/9 + 2/27$. Then $(1, 54, 1)$ is a half-relation.
        \item $q = (.00\ldots 022\ldots2)_3 $ be length $t$ with $(s - 1)$ many $0$'s. Then $(1, 3^{-s + t}(3^s + 3), 3^{s-1})$ is a half-relation.
        \item $q = (.007)_8 = \frac{7}{8^3}$. Then, $(1, 520, 64)$ is a half-relation.
        
    \end{itemize}
\end{example}

\begin{proof}

    Observe that \[(k-1)\sum_{n= s}^t k^{-n} = k^{-s + 1} - k^{-t}.\] This follows from the fact that \[\sum_{n = s}^t k^{-n} = k^{-s}\frac{k - k^{s-t}}{k-1}.\] Recall that $P_{HR}^3(a_1, a_2, a_3; q) = a_1a_2a_3q + a_1 - a_2 + a_3$. We compute that 
    \begin{align*}
        &P_{HR}^3(1, k^{-s + t}(k^s + k), k^{s - 1}; k^{-s + 1} - k^{-t}) \\
        & = k^{-s + t}(k^s + k)(k^{s-1})(k^{-s + 1} - k^{-t}) + 1 - k^{-s + t}(k^s + k) + k^{s - 1} \\
        & =0. 
    \end{align*}
    \end{proof}
We can also consider another simple base $k$ expansion. In particular, those that alternate between $0$ and $k-1,$ such as $(.070707)_{8}$, as well as those that start with a block of zeros and then do so, such as $(.00005050)_6$. Precisely, will consider rational numbers of the form \[q = (k-1)\sum_{n = 0}^t k^{-s - 2n}\] for positive integers $k, s, t$ and $s\ge 2$.

    \begin{proposition}
        Let $q = (k-1)\sum_{n = 0}^t k^{-s - 2n}$. Let $k, s$ and $t$ be positive integers and $s \ge 2$. Then, we have a length three half-relation $(k^{s-2}, (k^{s-2} + k+1)k^{2t + 2}, k + 1)$.
    \end{proposition}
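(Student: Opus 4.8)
The plan is to imitate the proof of the preceding proposition essentially verbatim: first put $q$ in closed rational form by summing the geometric series, then substitute directly into the length-$3$ half-relation polynomial, which was already recorded as
\[
P_{HR}^3(a_1,a_2,a_3;q) = a_1a_2a_3\,q + a_1 - a_2 + a_3 .
\]
For the closed form, assume $k\ge 2$ (the case $k=1$ gives $q=0$ and the tuple $(1,3,2)$, for which $0 + 1 - 3 + 2 = 0$ immediately). Pulling $k^{-s}$ out of the sum and using $\sum_{n=0}^t (k^{-2})^n = \tfrac{1-k^{-2t-2}}{1-k^{-2}}$ together with $1-k^{-2} = (k-1)(k+1)k^{-2}$ cancels the factor $k-1$ and yields
\[
q = (k-1)\sum_{n=0}^t k^{-s-2n} = \frac{k^{2-s} - k^{-s-2t}}{k+1}.
\]

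The key steps are then: (i) observe that $(a_1,a_2,a_3) = \big(k^{s-2},\,(k^{s-2}+k+1)k^{2t+2},\,k+1\big)$ is admissible, since with $s\ge 2$ and $k,t\ge 1$ each coordinate is a positive integer, hence nonzero; and (ii) carry out the substitution. Writing $a_1a_2a_3 = k^{s+2t}(k+1)(k^{s-2}+k+1)$, multiplication by $q$ cancels the $k+1$ and the powers of $k$ telescope, since $k^{s+2t}\cdot k^{2-s} = k^{2t+2}$ and $k^{s+2t}\cdot k^{-s-2t} = 1$, so that
\[
a_1a_2a_3\,q = (k^{s-2}+k+1)\big(k^{2t+2}-1\big).
\]
Adding $a_1 - a_2 + a_3 = k^{s-2} - (k^{s-2}+k+1)k^{2t+2} + (k+1)$, the two terms carrying the factor $k^{2t+2}$ cancel, and then the leftover constants $-(k^{s-2}+k+1) + k^{s-2} + (k+1)$ cancel as well, giving $P_{HR}^3(a_1,a_2,a_3;q)=0$. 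This is exactly the asserted half-relation.

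I do not expect a real obstacle here: once the closed form for $q$ is in hand the argument is pure exponent bookkeeping, parallel to the earlier proof. The one genuinely creative input, namely guessing the tuple, is already provided in the statement; if one wanted to motivate it, one could note that a length-$3$ half-relation for $q = N/D$ in lowest terms forces $a_1a_2a_3 q$ to be an integer, which dictates the powers of $k$ that must appear in $a_2$, after which the linear equation $a_1a_2a_3q + a_1 - a_2 + a_3 = 0$ determines the remaining factor. As a side remark, the same computation shows $q = (k-1)k^{-s}\tfrac{1-k^{-2t-2}}{1-k^{-2}} < \tfrac{k^{2-s}}{k+1}\le \tfrac13$, so every such $q$ lies in $(-4,4)$ and the half-relation genuinely witnesses $q\in\mathcal{R}_\bbQ$.
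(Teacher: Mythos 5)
Your proof is correct and follows essentially the same route as the paper's: sum the geometric series to get the closed form $q=\tfrac{k^{2-s}-k^{-s-2t}}{k+1}$ (the paper writes it as $(k-1)k^{-s}\tfrac{k^2-k^{-2t}}{k^2-1}$, which is the same thing), then substitute into $P_{HR}^3$ and watch everything cancel. The extra remarks (admissibility of the tuple, the degenerate $k=1$ case) are fine but not needed beyond what the paper does.
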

  
    \begin{example} Before the proof, we demonstrate some examples.
        \begin{itemize}
            \item $q = (.0202)_3$. Then, $ (1, 405, 4)$ is a half-relation
            \item $q = (.000606)_7$. Then, $(49, 136857, 8)$ is a half-relation
        \end{itemize}
    \end{example}

\begin{proof}
First, observe that
\begin{align*}
    &q = (k-1)\sum_{n = 0}^t k^{-s - 2n} = (k-1)k^{-s}\frac{k^2 - k^{-2t}}{k^2 - 1}.
\end{align*}

Now we plug in and verify that $P_{HR}^3(k^{s-2}, (k^{s-2} + k+1)k^{2t + 2}, k + 1; q)$ vanishes. First, compute the leading term:
\begin{align*}
    &(k^{s-2}) ((k^{s-2} + k+1)k^{2t + 2})(k + 1)\left((k-1)k^{-s}\frac{k^2 - k^{-2t}}{k^2 - 1}\right) \\
    &= k^{2t+s} + (k+1)k^{2t + 2} - k^{s-2} - (k+1).
\end{align*}

Then, we can compute the three remaining terms:
\begin{align*}
    & k^{s-2} - (k^{s-2} + k+1)k^{2t + 2} + (k + 1) = k^{s-2} - k^{2t + s} - (k+1)k^{2t + 2} + (k + 1).
\end{align*}

Adding the leading term to the remaining terms gives $0$.
\end{proof}

\subsection{Pell and Half-Pell Sequences} \label{PellSection}

In this section, we will use the techniques of \cite{Smilga2021} to demonstrate two new sequences of non-free numbers that converge to $1 + \sqrt 2$.

Recall the definition of a Pell Number:

    The $n$th Pell Number is the integer that satisfies the recurrence relation
    \begin{align*}
    &P_n = \begin{cases}
        0 & \text{\ if } n = 0 \\
        1 & \text{\ if } n = 1 \\
        2 P_{n-1} + P_{n-2} &\text{\ otherwise } 
    \end{cases}
    \end{align*}.

    The first few terms are $0, 1, 2, 5, 12, 29, 70, 169, \ldots$.

    and the half-companion Pell Numbers that satisfy 

    \begin{align*}
    &H_n = \begin{cases}
        1 & \text{\ if } n = 0 \\
        1 & \text{\ if } n = 1 \\
        2 H_{n-1} + H_{n-2} &\text{\ otherwise } 
    \end{cases}
    \end{align*}.

    The first few terms are $1, 1, 3, 7, 17, 41, 99, 239, \ldots$.

    Observe that $P_n = \frac{(1 + \sqrt2)^n + (1 - \sqrt{2})^n}{2 \sqrt{2}}$. Since $|1 - \sqrt{2}| < 1$, then \[\frac{P_{n+1}}{P_n}\to 1 + \sqrt{2} \text { as } n\to \infty.\]

    Similarly, $H_n = \frac{(1 + \sqrt{2})^n - (1 - \sqrt{2})^n}{2}$ so that \[\frac{H_{n+1}}{H_n}\to 1 + \sqrt{2} \text { as } n\to \infty.\]

    \begin{proposition}
        For $x_n = (-1)^{n+1} 2 P_n P_{n-1}$, then $(1, x_n, 1, -1, 1, -1)$ is a half-relation for $P_{n + 1}/ P_n$. For $y_n = (-1)^n H_n H_{n-1}$, then $(1, y_n, 1, -1, 1, -1)$ is a half-relation for $H_{n+1} / H_n$.
    \end{proposition}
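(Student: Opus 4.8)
The plan is to reduce the claim to the vanishing of a single quadratic polynomial in $q$ and then verify that vanishing using standard identities for Pell numbers. Since $\ell = 6$ is even, the Proposition equating half-relations with roots of $P_{HR}^\ell$ says that $(1, x_n, 1, -1, 1, -1)$ is a half-relation for $q$ exactly when $P_{HR}^6(1, x_n, 1, -1, 1, -1; q) = 0$. So the first step is to compute $P_{HR}^6(1, x, 1, -1, 1, -1; q)$ with $x$ left as a free parameter. Because $A^k = \left(\begin{smallmatrix}1 & k\\ 0 & 1\end{smallmatrix}\right)$ and $B_q^k = \left(\begin{smallmatrix}1 & 0\\ kq & 1\end{smallmatrix}\right)$, the matrix $M = A B_q^{x} A B_q^{-1} A B_q^{-1}$ is an explicit product of six elementary matrices; multiplying them out and extracting $\tfrac{1}{q}\bigl(c_{11}(M) - c_{22}(M)\bigr)$ gives
\[
	P_{HR}^6(1, x, 1, -1, 1, -1; q) = x q^2 + (2 - 2x)\,q - (x + 4).
\]
This is the only genuinely computational step, and it is short.

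Next I would solve the equation $P_{HR}^6 = 0$ for $x$ in terms of $q$. Since a rational $q$ is never a root of $q^2 - 2q - 1$ (whose roots are $1 \pm \sqrt2$), we may write
\[
	x = \frac{4 - 2q}{q^2 - 2q - 1},
\]
and it remains to check that substituting $q = P_{n+1}/P_n$ yields $x = (-1)^{n+1} 2 P_n P_{n-1}$, while substituting $q = H_{n+1}/H_n$ yields $x = (-1)^n H_n H_{n-1}$. For the Pell case, use the recurrence in the form $P_{n-1} = P_{n+1} - 2 P_n$ to rewrite the numerator as $4 - 2q = -2 P_{n-1}/P_n$ and the denominator as $q^2 - 2q - 1 = (P_{n+1} P_{n-1} - P_n^2)/P_n^2$; the Cassini-type identity $P_{n+1} P_{n-1} - P_n^2 = (-1)^n$ then collapses the quotient to exactly $(-1)^{n+1} 2 P_n P_{n-1}$. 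The half-companion case is identical except that the relevant identity is $H_{n+1} H_{n-1} - H_n^2 = 2(-1)^{n+1}$ (equivalently, the Pell equation $H_n^2 - 2 P_n^2 = (-1)^n$), which produces $(-1)^n H_n H_{n-1}$. Both Cassini-type identities follow by a one-line induction from the recurrences, or directly from the closed forms $P_n = \tfrac{(1+\sqrt2)^n - (1-\sqrt2)^n}{2\sqrt2}$ and $H_n = \tfrac{(1+\sqrt2)^n + (1-\sqrt2)^n}{2}$ recorded above, using $(1+\sqrt2)(1-\sqrt2) = -1$.

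I do not expect a real obstacle: the statement is a verification. The one place where care is needed is sign bookkeeping — getting the $\tfrac1q$ normalization in $P_{HR}^6$ right, and pinning down the correct exponents $(-1)^n$ versus $(-1)^{n+1}$ in the two Cassini identities — since these signs are precisely what force the parities that appear in $x_n$ and $y_n$. For that reason I would state and prove the two auxiliary identities explicitly before substituting, rather than merely quote them. As a consistency check one notes that $P_{n+1}/P_n \to 1+\sqrt2$ and $H_{n+1}/H_n \to 1+\sqrt2$, so these half-relation numbers do accumulate at $1+\sqrt2 \in (-4,4)$, matching the goal of the section.
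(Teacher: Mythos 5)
Your proposal is correct and follows essentially the same route as the paper: both reduce the claim to the vanishing of the quadratic $f(q,x)=xq^{2}-2xq+2q-x-4$ obtained from $P_{HR}^{6}(1,x,1,-1,1,-1;q)$, and both close the verification with the Cassini-type identities $P_{n+1}P_{n-1}-P_{n}^{2}=(-1)^{n}$ and $H_{n+1}H_{n-1}-H_{n}^{2}=2(-1)^{n+1}$. The only difference is organizational --- you solve $f=0$ for $x=\frac{4-2q}{q^{2}-2q-1}$ (correctly noting the denominator is nonzero for rational $q$) and check that substituting $q=P_{n+1}/P_{n}$ or $H_{n+1}/H_{n}$ recovers $x_{n}$ or $y_{n}$, whereas the paper substitutes both values at once and clears denominators by multiplying by $P_{n}$ (resp.\ $H_{n}$).
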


    \begin{example} Let $q_n = P_{n+1}/P_n$ and $a_n = H_{n + 1} / H_n$. The first two terms of each are
    \begin{align*}
        & q_2 = 5/2, x_2 = -4 &  a_2 = 7/3, y_2 = 3 \\
        & q_3 = 12/5, x_3 = 20 &  a_3 = 17/7, y_3 = -21
    \end{align*}
    \end{example}

\begin{proof}
         Let $q_n = P_{n+1}/P_n$ and $a_n = H_{n + 1} / H_n$. Observe that \[P_{HR}^6(1, x_n, 1, -1, 1 ,-1; q_n) = q_{n}^2x_n - 2q_nx_n + 2q_n - x_n - 4 =: f(q_n, x_n).\] We must show that $f(q_n, x_n)$ vanishes for all $n$. Recall the identity $P_{n-1}P_{n+1} - P_{n}^2 = (-1)^n$. Observe that
        \begin{align*}
		P_nf(q_n, x_n) & = 2(-1)^{n + 1} P_{n-1}P_{n+1}^2 - 4(-1)^{n + 1} P_{n-1}P_nP_{n+1} + 2P_{n+1} - 2(-1)^{n + 1} P_{n-1}P_n^2 - 4P_n \\
            & = 2(-1)^{2n + 1}P_{n-1} + 2P_{n + 1} - 4P_n \\
            & = 0.
        \end{align*}

        We will now perform a similar computation for the half companion Pell Numbers, instead plugging in $y_n$ and $a_n$ and using the identity $H_{n+1}H_{n-1} - H_n^2 = 2 (-1)^{n+1}$. Observe that
        \begin{align*}
            H_nf(a_n, y_n) & = (-1)^n H_{n+1}^2 H_{n-1}-(-1)^n 2 H_{n-1}H_n H_{n+1} + 2H_{n+1} - (-1)^n H_n^2 H_{n-1} - 4H_n \\
            & = (-1)^{2n + 1}2 H_{n-1} + 2H_{n+1} - 4H_n \\
            & = 0
        \end{align*}
\end{proof}
\bibliographystyle{alpha}
\bibliography{references}

\end{document}